 \newtheorem{thm}{Theorem}[section]
 \newtheorem{prop}[thm]{Proposition}
 \theoremstyle{definition}
 \newtheorem{defn}[thm]{Definition}
 \newtheorem{obs}[thm]{Observation}
 \theoremstyle{remark}
 \newtheorem{rem}[thm]{Remark}
 \numberwithin{equation}{section}
\newcommand{\bi}{\begin{itemize}}
\newcommand{\ei}{\end{itemize}}
\newcommand{\be}{\begin{enumerate}}
\newcommand{\ee}{\end{enumerate}}
\newcommand{\beq}{\begin{equation}}
\newcommand{\eq}{\end{equation}}
\begin{document}
%
%
%
%
%
%
%
%
%
\title[A note on $L^p_{w}(\nu,X,Y)$ spaces]
 {A note on $L^p_{w}(\nu,X,Y)$ spaces of vector-valued functions with respect to vector measures}
\author[Liliana Posada Vera]{Liliana Posada Vera}

\address{
Departamento de Matem\'aticas\\
Universidad del Valle\\
Calle 13 100-00. Cali, Departamento del Valle del Cauca, Colombia.
}

\email{liliana.posada@correounivalle.edu.co}

\thanks{The author was supported by Universidad del Valle.}
\subjclass[2010]{Primary 46G10; Secondary 46B25.} 

\keywords{$L^p$ spaces, vector measures.}

\date{\today}
\dedicatory{Dedicated to the memory of Professor Guillermo Restrepo Sierra}
\begin{abstract}
 In this work we introduce the $L^p_{w}(\nu; X; Y)$ spaces for the case where $\nu$ is a vector measure and the functions are vector-valued. We establish fundamental properties for such spaces.
\end{abstract}
\maketitle

\section{Introduction}
$L^p$ spaces play a crucial role in modern mathematics. Their properties, among other things, have originated fundamental results in several fields such as differential equations,  measure theory, and functional analysis. In this paper, we will pursue two aims. Firstly, to define $L^p_ {w}(\nu,X,Y)$ spaces for $X$-valued functions, where $X$ and $Y$ are Banach spaces, and $\nu$ is a vector measure valued on $Y$. Secondly, to establish some properties as separability, density of simple functions, and the H\"older inequality for these spaces.\\ The pioneering works in geometry and the theory of the  Banach spaces in infinite dimension have their theoretical origins in vector measures. An example of this is found in the theorem of Orlicz-Pettis, which can be considered as a link between the theory of vector measures and properties of the Banach spaces, because, it relates the convergence of series in Banach spaces with $\sigma$- additive vector measures.\\
A paramount question in the theory of vector measures is to determine the existence of Radon-Nikodym derivatives of a vector measure with respect to another.
This condition has crucial consequences in the study of weak compactness in spaces of functions such as the Lebesgue-Bochner space $L^1(\mu, X)$, where $(\Omega, \mathcal{A},\mu)$ is a finite measure space.
Furthermore, it is essential in the Lebesgue-Bochner space duality theorem $L^p(\mu,X)$, where its respective dual is $L^q(\mu,X)$ if and only if $X'$ has the property of Radon-Nikodym for $p^{-1} + q^ {-1} = 1$.
For further applications of vector measures, such as in game theory, control theory, stochastic differential equations, and statistics, just to mention a few, see for example \cite{EO} and \cite{GL}.


The introduction of integrable scalar or vector-valued functions with respect to vector measures leads to define interesting $L^p$ spaces, which might be used in other areas of mathematics. For example, in \cite{OB} and \cite{DM}, some properties of this sort of spaces for scalar functions and vector measures were extended yielding to important results in the harmonic analysis theory.
This type of applications motivates us to explore more about these spaces and their connections with other areas.\\
For this purpose we will focus on studying $L^p$ spaces for strongly measurable functions but weakly integrable with respect to a vector measure and the injective tensor product. In particular, we will use the definition of a $\otimes$-integrable function given in \cite{SG} and the characterization of the 
$\otimes$-integrable functions of order $p$, for 
$ 1 \leq p < \infty $.\\
Let $(\Omega, \mathcal{A})$ be a measurable space. We will say that a vector-valued function is $\otimes$ -integrable of order $p$ if and only if $\|f\|^p$ is $\nu$-integrable, i.e., $\|f\|^p$ is 
$|y'\circ\nu|$-integrable for all $ y' 
\in Y'$ and  for each $E\in\mathcal{A }$ there exists an element of $Y$ denoted by 
$\int_{E}\|f\|^p d\nu,$ such that $$y'\left(\int_{E}\|f\|^pd\nu\right) = \int_{E}\|f\|^pd(y'\circ 
\nu), \hspace{0.2cm} \textnormal{for each} \hspace{0.2cm} y'\in Y'.$$  We will base our construction of the $L^p_{w}(\nu,X,Y)$ spaces on $\otimes$-integrable functions of order $p$, for $1\leq p<\infty$,  that only satisfy the first condition. We will determine their properties, and by means of an extension of the norm introduced in \cite{CB2}, we will prove that $L^p_{w}(\nu,X,Y)$ is a Banach space. We will proved that the spaces $L^p_{w}(\nu,X,Y) $ are separable showing that the set of simple functions is dense in $L^p_{w}(\nu,X,Y)$, and that when the $\sigma$-algebra is countably generated  and $X$ is a separable Banach space, $L^p_{w}(\nu,X,Y)$ is separable. Additionally we will show that $L^p(\nu,X,Y)$ is a closed subspace of $L^p_{w}(\nu,X,Y)$ which is, a generalization of the proof by Stenfansson in \cite{SG1}. We point out the difficulty of dealing with these spaces. Since their duals might not coincide with $L_w^q$ versions. Even in the case of $X=\mathbb{R}$ the pathology remains. At the end of this note we will discuss this issue in more detail.

\vspace{0.2cm}

\section{Preliminares}
To start with, we will use the concept of $\nu$-integrability that was introduced by Lewis in \cite{L} for the case scalar valued-functions and vector measures. 
\vspace{0.2cm}

\begin{defn} \label{def1: definitionofintegrabilitylewis}
Let $(\Omega, \mathcal{A})$ be a measurable space, $f$ be a scalar valued  measurable function, $Y$ be a Banach space, $y'\in Y'$ with $Y'$ the dual of $Y$, and $\nu: \mathcal{A} \to Y$ a  $\sigma$-additive vector measure. A  $\nu$ -measurable function $f$ is \textit{$\nu$ -integrable} if
\be
\item $f$ is $y'\circ \nu$ -integrable for each $y'\in Y'$, i.e., $$\int_{\Omega} 
|f|d|y'\circ\nu|< \infty,$$
\item for every $E\in\mathcal{A}$ there exists an element of $Y$ denoted by $\int_{E} fd\nu,$ such that $$y'\left(\int_{E}fd\nu \right) = \int_{E}fd(y'\circ\nu),$$ for each $y'\in Y'$.
\ee
\end{defn}
\vspace{0.2cm}

Using this definition, on $1975$ Kluv\'anek and Knowles defined in \cite{KK} the  $L^1(\nu)$ space as the space of the $\nu$ -integrable functions that satisfy both conditions. Years later, G. Curbera in \cite{Cb} and \cite{GCb} established additional properties of this space and Sanchez in \cite{Sn} extended these studies para $1<p<\infty$.
\vspace{0.2cm}

Next, consider the following definition:
\vspace{0.2cm}

\begin{defn}
Let $1 < p <\infty$, and $\nu$ be a vector measure $\sigma$ -additive. We will say that a scalar measurable function  is \textit{$p$-integrable} with respect to $\nu$ if 
$|f|^p$ is $\nu$ -integrable.
\end{defn}

S\'anchez showed that with the norm $$\|f \|_{L^{p}(\nu)} = \sup\limits_ {\|y'\|\leq 1} \left \{\left(\int_{\Omega} |f|^{p}d|y'\circ\nu|\right)^{1/p} \right \}, $$ $L^p(\nu)$ is a Banach space and the set of simple functions is dense in $ L^p(\nu)$.
\vspace {0.2cm}

Morever, in \cite{Sn}, S\'anchez illustrated by means of on example an interes-ting situation. If $p$ and $q$ are such that $p^ {-1} + q ^ {-1} = 1 $, then the spaces $L^{q}(\nu)$ and $(L^{p}(\nu))'$ are different.\\

In \cite{SG1}, Stef\'ansson defined that a scalar valued measurable function  in the usual sense is \textit{weakly integrable},  if for all $ y'\in Y'$, $f$ is $y'\circ \nu$ -integrable and denotes $L^1_{w}(\nu)$ as the set of weakly integrable functions. Additionally, it showed that $L^ 1_ {w}(\nu) $ is a Banach space with the norm $$\|f\|_{L^{1}_ {w}(\nu)} = \sup 
\limits_{\|y'\| \leq 1} \left \{\int_ {
	\Omega} |f|d|y'\circ \nu| \right\}, $$ which  contains $ L^1 (\nu) $ as a closed subspace.\\

In \cite{DM}, for $ 1 <p <\infty $ the space $ L^{p}_ w (\nu) $ is defined.
\vspace{0.2cm}

\begin{defn}
	Let $ 1 <p <\infty $. The space $L^{p}_w (\nu) $ is defined as the space ($\|\nu\|$-equivalence classes of) of the measurable functions such that $|f|^p \in L^1_ {w} (\nu)$ .
\end{defn}
It can be shown that with the norm
$$ \| f \|_ {L^{p}_w(\nu)} = \sup\limits _ {\|y'\|\leq 1} \left \{\left (\int_ {
	\Omega}|f|^{p}d|y'\circ\nu|\right)^ {1/p} \right\}, $$ $ L^p_{w}(\nu) $ is a Banach space. Additionally, the following relationships between these spaces hold.
$$L^{p}(\nu) \subset L^{p}_w(\nu) 
\subset L^1_{w}(\nu).$$
\begin{rem}
	Thanks to Rybakov's theorem, we know that for every vector measure $\nu$ there exists a real measure of control $\mu$ that makes the vector measure 
	$\nu$ to be $\mu$ - continuous. Further, in \cite{DM} it is showed that for $p = 
	\infty$, the space $L^{\infty}(\nu)$ of the  measurable functions that are bounded $\nu$ -c.t.p, coincide with the space $L^{\infty}(\mu)$. That is the reason that do not consider this case.
\end{rem}
\vspace{0.2cm}

Following Stenf\'ansson in \cite{SG}, to extend  the notion of these spaces to  vector-valued and $\nu$ -measurable functions. We will say that a  vector- valued and $\nu$-measurable function $f$ is \textit{$\otimes$ -integrable}, if there exists a sequence of simple functions $(f_ {n})_ {n\in\mathbb {N}}$ of $ \Omega $ in $ X $ such that
$$ \lim\limits_{n \to \infty} \sup 
\limits_ {\|y'\| \leq 1} \left \{\int_ 
{\Omega} \| f-f_n \|d|y' 
\circ \nu| \right\} = 0. $$
From this definition, we make the following observations.
\vspace{0.2cm}

	\be
	\item It can be shown from this definition that $\left (\int_ {
		\Omega} f_{n} \otimes d \nu 
	\right)_ {n \in \mathbb{N}} $ is a Cauchy sequence on $X \widehat{
		\otimes}_{\epsilon} Y $.
	\item This integral is defined for each $ E\in\mathcal{A} $ and the $ \lim 
	\limits_ {n \to \infty} \int_{E} f_n 
	\otimes d \nu $ exists and is unique since $ X\widehat{\otimes}_ {\epsilon} 
	Y $ is a Banach space.
	\item If $f$ is $\otimes$ -integrable then $\int_{E}f\otimes d\nu $ is the vector such that $$ \int_{E} f 
	\otimes d\nu = \lim \limits_ {n \to 
		\infty} \int_{E} f_n \otimes d 
	\nu. $$
	\item It can be shown that this 
	limit does not depend on of the choice of $(f_n)_{n \in \mathbb{N}} $.
	\ee

\vspace{0.2cm}

Let $f$ be a $\nu$ -measurable function. In \cite{SG}, Stenf\'ansson  defines the space $L^1(\nu,X,Y)$ as the vector space of all ($\|\nu\|$-equivalence classes of) functions $\otimes$ -integrables equipped with the norm
$$\| f \|_{L^1(\nu,X,Y)} = \sup\limits_{\ |y'\| \leq 1} \left \{\int_{\Omega} \| f \|d|y'\circ \nu| \right\},$$ and shows that is a Banach space.
\vspace{0.2cm}

Furthermore, in \cite{CB1} additional properties of $L^1 (\nu,X,Y)$ are shown as the fact of being a Banach lattice, a separable space, and the density of the set of simple functions in $L^1 (\nu,X,Y)$.\\
 In \cite{CB2}, the spaces $ L^p(\nu,X,Y) $ for $1 <p <\infty $ are defined as follows.
\vspace{0.2cm}

\begin{defn}
	A  $ \nu $ -measurable function $ f: \Omega \to X $  is called \textit{$\otimes$ -integrable of order $p$}, if there exists a sequence of simple functions $ (f_n) _ {n \in \mathbb{N}}$ of $\Omega$ in $ X $ such that $$ \lim \limits_{n \to 
		\infty} \sup \limits_ {\|y'\| 
		\leq 1} \left \{\left(\int_{
		\Omega} \|f (w)-f_n (w) \|^p| d|y'\circ \nu | \right)^{1 / p} 
	\right\} = 0. $$
\end{defn}
\vspace{0.2cm}

As a consequence of this definition, it can be stated that if 
$f$ is $\otimes$ -integrable of order $p$, then the expression $$ 
\|f \|_{L^p(\nu,X,Y)} = \sup 
\limits_ {\|y'\| \leq 1} \left 
\{\left(\int_{\Omega}\| f \|^ pd| y' \circ \nu| \right)^{1/p} 
\right\},$$  is a norm, and hence $L^p(
\nu,X,Y)$ is a Banach space.
\vspace{0.2cm}

Additionally, in \cite{CB2}, Chakraborty and Basu, considered some properties similar to those given by Stenf\'ansson in 
\cite{SG}. One of them is that $f$ is $\otimes$ -integrable of order $p$ if and  only if 
$ \| f \|^p$ is $\nu$ -integrable, i.e., $\|f\|^ p \in L^1(\nu) $. (See Theorem 
$1$ in \cite{CB2}, page $90$). Remarkably, this property allowed to show the existence of the dominated convergence theorem of order 
$p$.
\vspace{0.2cm}

In addition to the above, and as a generalization of S\'anchez work in \cite{Sn}, Chakraborty and Basu  showed  that $L^p (\nu,X,Y)$ is a Banach lattice, $L^p(\nu,X,Y) \subset L^1(\nu,X,Y) $ for $1<p<\infty $, and exhibits a dual of the spaces $ L^p(\nu,X,Y) $ .

Furthermore, in \cite{CB2}, they generalized S\'anchez's spaces in the case of real-valued  weakly integrable functions, considering weakly measurable functions defining the w- $L^p (\nu, X, Y)$  spaces and establishing several properties.

	\section{Results}
	
	In this section we will introduce the $L^p_{w}(\nu,X,Y)$ spaces for functions $\nu$ -measurable and integrable with respect to $ |y'\circ\nu|$ for all $ y'\in Y'$. Additionally we will state properties of separability and density of simple functions.
    We remark that we are based on the Theorem $1$ in 
	\cite{CB2} which states that $f$ is $\otimes$ -integrable of order $p$ if and only if $\|f \|\in L^p(\nu)$.
	\vspace{0.2cm}
	
	\begin{defn}
	Let $1 \leq p <\infty $. We define 
	$L^p_{w}(\nu,X,Y)$ as the space of \textit{the functions $\nu$ -measurable ($\|\nu 
	\|$ equivalence classes) such that 
	$\|f \|^ p$ is $|y'\circ\nu|$ -integrable}, that is,
	$$ \int_ {\Omega} \|f \|^pd|y'\circ 
	\nu|<\infty, $$ for each $y' \in Y'$.
	\end{defn}

If the expression $$\sup\limits_{\|y'\| 
	\leq 1} \left \{\left(\int_{\Omega} 
\|f\|^pd|y'\circ\nu| \right)^{1/p}\right \} <\infty, $$ then it can be shown that it is a norm that we will denote by $ \|f \|_ {L^p_{w}(\nu,X,Y)}$. In addition, under this norm it can be proved that  $L^p_{w}(\nu,X,Y)$ is a Banach space, and the proof of this fact is similar  to that given  in \cite{SG} by Stenf\'ansson for the case $p=1$.\\


\vspace{0.2cm}
\begin{obs}
	Let be $1<p<\infty$.
Note that if $X=\mathbb{R}$ then $L^p_{w}(\nu,X,Y)=L^p_{w}(\nu)$.
If $Y=\mathbb{R}$ then $L^p_{w}(\nu,X,Y)=L^p(\nu,X)$ where $L^p(\nu,X)$ corresponds to Lebesgue-Bochner spaces.
\end{obs}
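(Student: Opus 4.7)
The plan is to prove the observation by directly unpacking the definition of $L^p_{w}(\nu,X,Y)$ in each of the two special cases and checking that it agrees, as a set and as a normed space, with the space on the right-hand side. Both reductions are essentially notational, so I do not expect any genuine obstacle; the only step requiring attention is that the underlying measurability notion specialises correctly in each case.

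\textbf{Case $X=\mathbb{R}$.} Here $f\colon\Omega\to\mathbb{R}$ is scalar-valued, hence $\|f\|=|f|$, and $\nu$-measurability coincides with ordinary measurability. Substituting into the defining conditions, $f$ lies in $L^p_{w}(\nu,\mathbb{R},Y)$ if and only if $\int_\Omega|f|^p\,d|y'\circ\nu|<\infty$ for every $y'\in Y'$, which is verbatim the definition of $L^p_{w}(\nu)$ recalled in the Preliminaries. Moreover, the norm
\[
\sup_{\|y'\|\le 1}\Bigl(\int_\Omega|f|^p\,d|y'\circ\nu|\Bigr)^{1/p}
\]
coincides with the one displayed for $L^p_{w}(\nu)$, so the equality is both set-theoretic and isometric.

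\textbf{Case $Y=\mathbb{R}$.} Here $\nu$ is a real (signed) measure and $Y'=\mathbb{R}$. Every functional $y'\in Y'$ is multiplication by a scalar $c\in\mathbb{R}$, so $y'\circ\nu=c\,\nu$ and therefore $|y'\circ\nu|=|c|\,|\nu|$. Consequently, the requirement that $\|f\|^p$ be $|y'\circ\nu|$-integrable for every $y'\in Y'$ collapses to the single condition $\|f\|^p\in L^1(|\nu|)$, and
\[
\sup_{\|y'\|\le 1}\int_\Omega\|f\|^p\,d|y'\circ\nu|=\int_\Omega\|f\|^p\,d|\nu|.
\]
This pair (strong $|\nu|$-measurability plus $\|f\|^p\in L^1(|\nu|)$), together with the norm $\bigl(\int_\Omega\|f\|^p\,d|\nu|\bigr)^{1/p}$, is precisely the definition of the Lebesgue--Bochner space $L^p(\nu,X)=L^p(|\nu|,X)$.

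The only point to verify carefully is that $\nu$-measurability matches the measurability used in the target spaces: when $X=\mathbb{R}$ it reduces to ordinary scalar measurability (the range being finite-dimensional), and when $Y=\mathbb{R}$ the notion of $\nu$-measurability---measurability with respect to a Rybakov control measure---coincides with strong $|\nu|$-measurability, since $|\nu|$ itself is a control measure for the scalar-valued $\nu$. Once this identification is made, the two equalities claimed in the observation follow immediately, and the passage to $\|\nu\|$-equivalence classes on both sides is compatible.
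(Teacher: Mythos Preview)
Your argument is correct and is exactly the kind of definitional unpacking one would expect; the paper itself states this as an observation without proof, so there is no alternative approach to compare against. Your added care about the measurability notions in each specialisation is appropriate and goes slightly beyond what the paper makes explicit.
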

\vspace{0.2cm}
We will show that $L^p(\nu,X,Y)$ is a closed subspace of $L^p_{w}(\nu,X,Y)$. 
\vspace{0.2cm}

\begin{thm}
	$L^p(\nu,X,Y)$ is a closed subspace of $L^p_{w}(\nu,X,Y)$.
\end{thm}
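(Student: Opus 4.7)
The plan is to establish two things: first, the inclusion $L^p(\nu,X,Y)\subset L^p_{w}(\nu,X,Y)$ is isometric, and second, any $L^p_w$-limit of elements of $L^p(\nu,X,Y)$ lies again in $L^p(\nu,X,Y)$. The inclusion is the easy half: if $f\in L^p(\nu,X,Y)$, then by Theorem~$1$ of \cite{CB2}, $\|f\|^p\in L^1(\nu)$ in the sense of Lewis, so in particular $\|f\|^p$ is $|y'\circ\nu|$-integrable for every $y'\in Y'$, which is exactly membership in $L^p_w(\nu,X,Y)$; moreover, the two norms are given by the same supremum expression, so the inclusion is norm-preserving.

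For the closure, suppose $(f_n)\subset L^p(\nu,X,Y)$ converges to some $f$ in $L^p_w(\nu,X,Y)$. In particular $f$ is $\nu$-measurable (being in $L^p_w$). The strategy is to exhibit a sequence of simple functions converging to $f$ in the $L^p_w$-norm, which by the definition of $\otimes$-integrability of order $p$ would place $f$ in $L^p(\nu,X,Y)$. For each $n$, since $f_n$ is $\otimes$-integrable of order $p$, I can select a simple function $s_n$ with
\[
\|f_n-s_n\|_{L^p(\nu,X,Y)}<\frac{1}{n}.
\]
Because the $L^p$- and $L^p_w$-norms coincide as expressions, and both $f_n$ and $s_n$ belong to $L^p(\nu,X,Y)$, this inequality equally controls $\|f_n-s_n\|_{L^p_w(\nu,X,Y)}$.

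Then the triangle inequality in $L^p_w(\nu,X,Y)$ gives
\[
\|f-s_n\|_{L^p_w(\nu,X,Y)}\le \|f-f_n\|_{L^p_w(\nu,X,Y)}+\|f_n-s_n\|_{L^p_w(\nu,X,Y)},
\]
and both summands tend to zero. Unwinding the definition of the $L^p_w$-norm, this says
\[
\lim_{n\to\infty}\sup_{\|y'\|\le 1}\left(\int_{\Omega}\|f-s_n\|^p\,d|y'\circ\nu|\right)^{1/p}=0,
\]
which is precisely the condition for $f$ to be $\otimes$-integrable of order $p$; hence $f\in L^p(\nu,X,Y)$.

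The main conceptual point to verify carefully is that the two norms really are the same functional on the common subspace, so that the error estimates transfer between the two spaces; no serious obstacle is expected beyond this, and the argument closely mirrors Stef\'ansson's proof for $p=1$ in \cite{SG1}, now leveraged through the characterization in \cite{CB2}.
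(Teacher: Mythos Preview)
Your proof is correct and takes a genuinely different route from the paper's. The paper argues via the characterization $f\in L^p(\nu,X,Y)\Leftrightarrow \|f\|^p\in L^1(\nu)$: it defines indefinite integrals $\mu_m(E)=\int_E\|f_m\|^p\,d\nu$ and shows they converge in $Y$ uniformly in $E$, thereby producing the element of $Y$ required by condition~(2) of Lewis's definition of $\nu$-integrability for $\|f\|^p$. Your argument instead returns to the \emph{original} definition of $\otimes$-integrability of order $p$ and manufactures approximating simple functions directly, via a diagonal pass through the $f_n$. This is more elementary: once one observes that the $L^p$- and $L^p_w$-norms are literally the same expression on their common domain, closedness is nearly automatic, since $L^p(\nu,X,Y)$ is by construction the closure of the simple functions in that common norm. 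Your route also sidesteps the pointwise inequality $\bigl|\,\|f_m\|^p-\|f\|^p\,\bigr|\le\|f_m-f\|^p$ invoked in the paper's estimate of $\|\mu_m-\mu\|(E)$, which fails in general (take $p=2$, $\|f_m\|=2$, $\|f\|=1$) and would require additional justification.
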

\begin{proof}
	Let be $f\in L^p_{w}(\nu,X,Y)$ and $(f_m)_{m\in 
		\mathbb{N}}$ be a sequence of $\nu$ -measurable functions in $L^p(\nu,X,Y)$ that converges to $f$ in $L^p_{w}(\nu,X,Y)$. Then $(\|f_m\|^p)_{m\in \mathbb{N}}$ is a sequence of measurable functions in $L^1(\nu)$. If we define
	$$\mu_m(E)=\int_{E}\|f_m\|^pd\nu, \hspace{0.2cm} \textnormal{and} \hspace{0.2cm} \mu(E)=\int_{E}\|f\|^pd\nu,$$
	then $\mu_m$ and $\mu$ are vector measures $\sigma$-additive and therefore
	$$\|\mu_m(E)-\mu(E)\|\leq \|\mu_m-\mu\|(E)\to 0,$$ as
\begin{align*} \|\mu_m-\mu\|(E)
& =\sup\limits_{\|y'\|\leq 1}\left\{\int_{E}|\|f_m\|^p-\|f\|^p|d|y'\circ\nu|\right\}\\
& \leq \sup\limits_{\|y'\|\leq 1}\left\{\int_{E}\|f_m-f\|^pd|y'\circ\nu|\right\}\\
& = \|f_m-f\|_{L^p_{w}(\nu,X,Y)}\to 0
\end{align*}

when $m\to \infty$ and holds for all $E\in \mathcal{A}$. And hence $\|f\|^p \in L^1(\nu)$. In conclusion $f\in L^p(\nu,X,Y)$.
\end{proof}

Of the previously defined, we can also show that

$$L^p(\nu,X,Y)\subset L^p_{w}(\nu,X,Y)\subset L^1_{w}(\nu,X,Y)$$ and $$L^p(\nu,X,Y)\subset L^1(\nu,X,Y)\subset L^1_{w}(\nu,X,Y).$$
\vspace{0.2cm}
	
We are going to state properties of density and separability for these spaces.

	\begin{thm}
Let	$1\leq p<\infty$. The set of simple functions is dense in $L^p_{w}(\nu,X,Y)$.
	\end{thm}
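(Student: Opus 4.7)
The approach I would take is to use the strong $\nu$-measurability of $f$ to produce simple functions converging to $f$ pointwise, and then upgrade pointwise convergence to $L^p_w$-norm convergence via a uniform dominated convergence argument.

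Starting from $f \in L^p_w(\nu,X,Y)$, since $f$ is $\nu$-measurable, the standard Pettis/Bochner approximation supplies a sequence $(s_n)$ of simple $X$-valued functions with $s_n(\omega) \to f(\omega)$ pointwise $\nu$-a.e.\ and $\|s_n(\omega)\|\le 2\|f(\omega)\|$ everywhere. Setting $g_n:=\|f-s_n\|^p$, one has $g_n\to 0$ pointwise $\nu$-a.e.\ and $0\le g_n\le 3^p\|f\|^p$, a function which, by the hypothesis $f \in L^p_w(\nu,X,Y)$, is $|y'\circ\nu|$-integrable for every $y'\in Y'$. The scalar dominated convergence theorem applied to the finite positive measure $|y'\circ\nu|$ then yields $\int_\Omega g_n\,d|y'\circ\nu|\to 0$ for each individual $y'$; the real task is to make this convergence uniform over $y'\in B_{Y'}$.

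For the uniform step I would argue as follows. Let $\mu$ be a Rybakov control measure for $\nu$. By the Bartle--Dunford--Schwartz theorem the family $\{y'\circ\nu:\|y'\|\le 1\}$ is uniformly $\sigma$-additive, so via Hahn decomposition the family $\{|y'\circ\nu|:\|y'\|\le 1\}$ is uniformly $\mu$-absolutely continuous, and the semivariation $\|\nu\|(\Omega)=\sup_{\|y'\|\le 1}|y'\circ\nu|(\Omega)$ is finite. Given $\varepsilon>0$, Egorov's theorem in $(\Omega,\mathcal{A},\mu)$ produces a set $A$ with $\mu(\Omega\setminus A)$ arbitrarily small and $g_n\to 0$ uniformly on $A$. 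On $A$ one has
\begin{align*}
\int_A g_n\,d|y'\circ\nu| \; \le \; \|g_n\mathbf{1}_A\|_\infty\cdot\|\nu\|(\Omega),
\end{align*}
which vanishes as $n\to\infty$ uniformly in $y'$. Off $A$ what remains is $3^p\int_{\Omega\setminus A}\|f\|^p\,d|y'\circ\nu|$, and this must be forced uniformly small in $y'$ as $\mu(\Omega\setminus A)\to 0$.

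The main obstacle is precisely this last uniform smallness: the uniform absolute continuity of the integrals of the fixed $L^1_w$-function $\|f\|^p$ against the family $\{|y'\circ\nu|:\|y'\|\le 1\}$, which is strictly stronger than the uniform absolute continuity of the measures themselves. My plan for handling it is a truncation: write $h:=\|f\|^p$ as $h=(h\wedge N)+(h-h\wedge N)$ at a level $N$ to be chosen. The bounded part contributes at most $N\cdot\sup_{\|y'\|\le 1}|y'\circ\nu|(\Omega\setminus A)$, which tends to zero as $\mu(\Omega\setminus A)\to 0$ by the uniform $\mu$-absolute continuity of $\{|y'\circ\nu|\}$; the tail is forced uniformly small by choosing $N$ large, exploiting that $\|h\|_{L^1_w(\nu)}<\infty$ together with the monotone increase of the truncations $h\wedge N\nearrow h$ in the $L^1_w$-norm. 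This delicate truncation is where the argument needs the most care; once it is in place, assembling the three pieces yields $\|f-s_n\|_{L^p_w(\nu,X,Y)}\to 0$, proving that simple functions are dense.
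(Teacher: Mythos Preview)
Your argument has a genuine gap at the truncation step, and it is precisely the crux of the whole problem. You need
\[
\sup_{\|y'\|\le 1}\int_\Omega (h-h\wedge N)\,d|y'\circ\nu|\longrightarrow 0\qquad (N\to\infty)
\]
for $h=\|f\|^p\in L^1_w(\nu)$. Your justification---finiteness of $\|h\|_{L^1_w(\nu)}$ together with ``monotone increase of $h\wedge N\nearrow h$ in the $L^1_w$-norm''---only gives $\|h\wedge N\|_{L^1_w}\nearrow\|h\|_{L^1_w}$ (take monotone convergence for each fixed $y'$ and then the supremum). Because the $L^1_w$-norm is a \emph{supremum} over $y'\in B_{Y'}$ and not an integral, you cannot subtract: $\|h\wedge N\|\to\|h\|$ does not imply $\|h-h\wedge N\|\to 0$. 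Worse, your tail claim is essentially equivalent to what you are trying to prove: it would make bounded (hence simple) functions dense in $L^1_w(\nu)$, and combined with the previous theorem that $L^1(\nu)$ is closed in $L^1_w(\nu)$ this forces $L^1(\nu)=L^1_w(\nu)$. Concretely, take $Y=c_0$, $\Omega=\mathbb N$, $\nu(\{n\})=e_n/n$; then $h(n)=n$ satisfies $\|h\|_{L^1_w}=1$, yet for $y'=e_k^*\in\ell^1$ with $k\ge 2N$ one has $\int(h-h\wedge N)\,d|y'\circ\nu|=(k-N)/k\ge\tfrac12$, so $\|h-h\wedge N\|_{L^1_w}\ge\tfrac12$ for every $N$. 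Your tail does not vanish, and neither does the Egorov remainder built on it.

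The paper proceeds along a different line, avoiding Egorov and domination entirely. It first invokes Pettis measurability to approximate $f$ \emph{uniformly} by countably-valued functions $f_n$ with $\|f-f_n\|\le 1/n$ $\nu$-a.e.; this single estimate already controls what in your scheme was the ``on $A$'' part. Writing $f_n=\sum_{m\ge1}x_{nm}\chi_{E_{nm}}$, the paper then truncates to the simple function $\phi_n=\sum_{m\le p_n}x_{nm}\chi_{E_{nm}}$ and argues that the tail $\sup_{\|y'\|\le 1}\int_{\bigcup_{m>p_n}E_{nm}}\|f_n\|^p\,d|y'\circ\nu|$ can be made small via an $\|\nu\|$-absolute continuity computation specific to the countably-valued structure of $f_n$. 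That step is exactly the delicate point you isolated, just approached from a different angle; study it carefully, because your Egorov/truncation route, as written, begs the question at that very place.
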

	\begin{proof}
	Let $f\in L^{p}_w(\nu,X,Y)$. As a consequence of the Pettis measurability theorem for vector measures, there exists a sequence of functions $(f_n)_{n \in 
		\mathbb{N}}$  $\nu$-measurables that  only assume countably many values such that $\|f_n-f \| \leq 1/n $, $ \nu$ -c.t.p. Hence  $$ \| f_n \|^p \leq 2^p\left (\| f \|^p + \frac{1}{n^p} 
	\right),$$ and  $$\sup\limits_{\|y'\|\leq 1}\int_ {\Omega} \| f_n \|^pd|y'\circ \nu| <\infty,$$ that means
	 that $(f_n)_{n\in\mathbb{N}}\subset L^p_{w}(\nu,X,Y)$.
	We can show that $$\lim\limits_{\|\nu\|(E)\to 0}\sup\limits_{\|y'\|\leq 1}\int_{\Omega}\|f_n\chi_E\|^pd|y'\circ\nu|= 0$$ for each $E\in\mathcal{A}$. Indeed, writing $$f_n=\sum\limits_{m=1}^{\infty}x_{nm}\chi_{E_{nm}},$$ where $E_{nk}\cap E_{nj}= \emptyset$ if $k\neq j$ and $x_{nm}\in X$, we obtain for each
$y'\in Y'$ the next:
\begin{align*}
\int_{\Omega}\|f_n\chi_E\|^pd|y'\circ\nu| & =\int_{\Omega}\sum\limits_{m=1}^\infty\|x_{nm}\|^p \chi_{E_{nm}\cap E}d|y'\circ \nu| \\
& = \sum\limits_{m=1}^\infty\int_{\Omega}\|x_{nm}\|^p \chi_{E_{nm}\cap E}d|y'\circ\nu|\\
& = \sum\limits_{m=1}^\infty\|x_{nm}\|^p|y'\circ\nu|\left(E_{nm}\cap E\right)\\
& \leq \sum\limits_{m=1}^\infty\|x_{nm}\|^p|y'\circ\nu|\left(E\right)
\end{align*}
Taking the supremo on $\|y'\|\leq 1$, we obtain
\begin{align*}
\sup\limits_{\|y'\|\leq 1}\int_{\Omega}\|f_n\|^pd|y'\circ\nu| & \leq \sup\limits_{\|y'\|\leq 1}\sum\limits_{m=1}^\infty\|x_{nm}\|^p|y'\circ\nu|\left(E\right)\\
& = \sup\limits_{\|y'\|\leq 1}|y'\circ\nu|\left( E\right)\sum\limits_{m=1}^\infty\|x_{nm}\|^p \\
& = \|\nu\|\left(E\right)\sum\limits_{m=1}^\infty\|x_{nm}\|^p \to 0
\end{align*}
when $\|\nu\|(E)\to 0$.

 For each $ n $, we can choose a $ p_n $ large enough so that $$ \sup\limits_{\|y'\|\leq 1}\int_{\bigcup \limits_ {m= p_ {n} +1}^ \infty E_ {nm}} \| f_n \|^pd |
y'\circ \nu | \leq \frac {\| \nu \| (
	\Omega)}{n}. $$ If $ \phi_n = \sum 
\limits_{m = 1}^{p_n} x_ {nm} \chi_ {E_ {nm}} $, for all $y'\in Y'$ we have 
\begin{align*}
\int _ {\Omega} \| f- \phi_n \|^p d|y'
\circ \nu| & \leq 2^p \left \{\int_ {
	\Omega} \|f-f_n \|^pd|y'\circ \nu | + \int_ {\Omega} \|f_n- \phi_n \|^pd|y'
\circ \nu | \right \} \\
& \leq 2^p \left [\frac {\| \nu \| (
	\Omega)} {n^p} + \int _ {\Omega} 
\left \| \sum \limits_{m = p_n + 1}^ 
\infty x_ {nm} \chi_ {E_ {nm}} \right \| ^ pd |y'\circ \nu|\right]
\end{align*}
Taking the supremum over all $ \| y'\| 
\leq 1 $, we get
\begin{align*}
\sup\limits_{\|y'\| \leq 1} \int_ {
\Omega} \| f- \phi_n \|^pd |y'\circ \nu| & \leq 2^p \left [\frac{\|\nu\|(\Omega)} {n^p} + \int_{\bigcup\limits_{p_{n} +1} ^ \infty E_{nm}} \| f_n \|^pd |y '\circ \nu | \right] \\
& \leq 2^p \left [\frac {\| \nu \| (
	\Omega)} {n^p}+ \frac{\| \nu \| (
	\Omega)}{n} \right]
\end{align*}
Then $$\lim\limits_{n\to\infty} \| f- 
\phi_n \|_ {L^p_{w}(\nu,X,Y)} = 0,$$ which completes the proof.
\end{proof}

Next, we will show that the $L^p_{w}(\nu, X, Y)$ spaces are separable.

\begin{thm}
Let $ (\Omega, \mathcal{A})$ be a measurable space and be $p$ a real such that $1 \leq p <\infty $. If $X$ is a separable Banach space and the $\sigma$- algebra is countably generated, then the space $ L^p_{w}(\nu,X,Y) $ is separable.
\end{thm}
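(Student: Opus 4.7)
The plan is to use the density theorem just established and exhibit a countable subset of the simple functions that is already dense in $L^p_w(\nu,X,Y)$. Fix a countable dense subset $D=\{d_k\}_{k\in\mathbb{N}}$ of $X$ and a countable family $\{A_n\}_{n\in\mathbb{N}}$ generating $\mathcal{A}$. Let $\mathcal{R}$ be the countable Boolean algebra generated by $\{A_n\}$, so that $\sigma(\mathcal{R})=\mathcal{A}$. The candidate countable dense set is
$$\mathcal{F}=\left\{\sum_{i=1}^{N}d_{k_i}\chi_{F_i}:N\in\mathbb{N},\ d_{k_i}\in D,\ F_i\in\mathcal{R}\right\}.$$

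Given an arbitrary simple function $\phi=\sum_{i=1}^{N}x_i\chi_{E_i}$ with $x_i\in X$ and pairwise disjoint $E_i\in\mathcal{A}$, and any $\varepsilon>0$, I would approximate $\phi$ by an element of $\mathcal{F}$ in two steps. First, by separability of $X$, choose $d_{k_i}\in D$ with $\|x_i-d_{k_i}\|$ as small as desired. Second, invoke Rybakov's theorem (already cited in the paper) to produce a control measure $\mu=|y'_0\circ\nu|$ for $\nu$; since $\mu$ is a finite positive measure on $\sigma(\mathcal{R})=\mathcal{A}$, the standard approximation result yields $F_i\in\mathcal{R}$ with $\mu(E_i\triangle F_i)$ arbitrarily small, and the $\mu$-continuity of $\nu$ then forces $\|\nu\|(E_i\triangle F_i)$ to be small as well.

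Setting $\psi=\sum_{i=1}^{N}d_{k_i}\chi_{F_i}\in\mathcal{F}$, the triangle inequality splits the $L^p_w(\nu,X,Y)$-norm of $\phi-\psi$ into two pieces. By disjointness of the $E_i$, the first piece
$$\Bigl\|\sum_i(x_i-d_{k_i})\chi_{E_i}\Bigr\|_{L^p_w(\nu,X,Y)}$$
is bounded by $\bigl(\max_i\|x_i-d_{k_i}\|\bigr)\,\|\nu\|(\Omega)^{1/p}$. For the second piece, the pointwise inequality
$$\Bigl\|\sum_i d_{k_i}(\chi_{E_i}-\chi_{F_i})\Bigr\|^p\le N^{p-1}\sum_i\|d_{k_i}\|^p\chi_{E_i\triangle F_i},$$
combined with $|y'\circ\nu|(A)\le\|\nu\|(A)$ for $\|y'\|\le 1$, controls the norm by a constant depending only on $N$ and the $\|d_{k_i}\|$, times $\bigl(\sum_i\|\nu\|(E_i\triangle F_i)\bigr)^{1/p}$. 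Both quantities can be made less than $\varepsilon/2$ by the two approximation steps, showing that $\mathcal{F}$ is dense and hence $L^p_w(\nu,X,Y)$ is separable.

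The only genuine obstacle is that the ambient norm is a supremum over the unit ball of $Y'$, so the set-approximation has to be uniform in $y'$. This is handled cleanly by Rybakov's theorem: it reduces uniform control of $\|\nu\|$-smallness to smallness with respect to the single scalar control measure $\mu$, for which $\mathcal{R}$ is known to be dense in the symmetric-difference pseudo-metric. Everything else amounts to the routine triangle-inequality bookkeeping outlined above.
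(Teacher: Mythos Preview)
Your proposal is correct and follows essentially the same strategy as the paper: reduce to simple functions via the density theorem, then approximate the coefficients by a countable dense subset of $X$ and the characteristic functions by a countable family coming from the generators of $\mathcal{A}$. The only notable difference is that you make explicit, via Rybakov's control measure, the mechanism by which sets in $\mathcal{A}$ are approximated in semivariation by sets from the countable algebra uniformly over $\|y'\|\le 1$, a step the paper simply asserts.
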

\begin{proof}
	Let $D$ be a dense and countable subset of $X$. Since $\mathcal{A}$ is countably generated then there exits a countable set $
	\mathcal{F} \subset \mathcal{A}$.
	We consider the collection of all finite sums $\sum \limits_{j=1}^ n q_j \chi_{D_j} $ where $q_j \in D$ and $D_j \in \mathcal{F}$. This collection is countable and is contained in the space $L^p_{w}(\nu,X,Y) $. In what follows, we will show that this collection is a dense subset.\\
	\vspace{0.1cm}
	
Let $f \in L^p_{w}(\nu,X,Y)$ and 
	$\epsilon > 0$. Since the set of simple functions is dense in $L^p_{w}(
	\nu,X,Y) $. There exists a simple function $g$ such that $$ 
	\|f-g \|_{L^p_{w}(\nu,X,Y)} \leq 
		\epsilon.$$ From now on, we assume that $ g $ is written as $ \sum 
	\limits_{j=1}^n\alpha_j \chi_{A_j}$ where each $\alpha_{j}\in X$ and every $A_j \in \mathcal{A}$. Our purpose here is to show that for this function $ g $, there exist $ q_j \in D $ and
	 $D_j\in \mathcal{F}$ such that
$$\left\|g-\sum\limits_{j=1}^nq_j\chi_{D_j}\right\|_{L^p_{w}(\nu,X,Y)}\leq  \epsilon.$$
We begin by noting that since $X$ is separable for each $\alpha_j$ we can choose a $q_j$ such that
$$\|\alpha_j-q_j\|\leq \frac{\epsilon}{(2n\|\nu\|(\Omega))^{1/p}},$$ for $1\leq j \leq n$. For $ 
\mathcal{A}$ countably generated we have 
$$\sup\limits_{\|y'\|\leq 1}\int_{\Omega}|\chi_{A_j}-\chi_{D_j}|d |y'\circ \nu|\leq \frac{\epsilon^p}{2n\beta^p},$$ where $\beta=\sup\limits_{1\leq j\leq n}\|\alpha_j\|$.
Then

\begin{align*}
\left\|\sum\limits_{j=1}^n \alpha_j \chi_{A_j}-\sum\limits_{j=1}^n q_j \chi_{D_j}\right\|_{L^p_{w}(\nu,X,Y)}^p & = \sup\limits_{\|y'\|\leq 1}\int_{\Omega}\left\|\sum\limits_{j=1}^n \alpha_j \chi_{A_j}-\sum\limits_{j=1}^n q_j \chi_{D_j}\right\|^p d|y'\circ \nu|\\
& = \sup\limits_{\|y'\|\leq 1}\int_{\Omega}\sum\limits_{j=1}^n \|\alpha_j \chi_{A_j}- q_j \chi_{D_j}\|^p d|y'\circ \nu|\\
& =\sup\limits_{\|y'\|\leq 1}\sum\limits_{j=1}^n\int_{\Omega}\|\alpha_j \chi_{A_j}+\alpha_j\chi_{D_j}-\alpha_j\chi_{D_j}- q_j \chi_{D_j}\|^p d|y'\circ \nu|\\
& \leq \sup\limits_{\|y'\|\leq 1} \sum\limits_{j=1}^n \int_{\Omega}\|\alpha_j\|^p|\chi_{A_j}-\chi_{D_j}| d|y'\circ \nu|\\
& + \sup\limits_{\|y'\|\leq 1} \sum\limits_{j=1}^n \int_{\Omega}\|\alpha_j-q_j\|^p|\chi_{D_j}|d|y'\circ \nu|\\
& \leq \epsilon^p
\end{align*}

\begin{align*}
\left\|f-\sum\limits_{j=1}^n q_j\chi_{D_j}\right\|_{L^p_{w}(\nu,X,Y)} & = \|f-g\|_{L^p_{w}(\nu,X,Y)}+\left\|\sum\limits_{j=1}^n \alpha_j \chi_{A_j}-\sum\limits_{j=1}^n q_j \chi_{D_j}\right\|_{L^p_{w}(\nu,X,Y)}\\
& \leq 2\epsilon 
\end{align*}
This shows that the space $ L^p_{w}(
\nu,X,Y)$ is  separable.
\end{proof}
We now establish a H\"older inequality for our spaces.
\begin{prop}(H\"older inequality). \label{prop:Desigualdad de Holder}
	Let be $1< p < \infty$ and $\frac{1}{p}+\frac{1}{q}=1$. Let $(\Omega,\mathcal{A})$ be a mesurable space, $X,Y$ Banach spaces and  $\nu:\mathcal{A}\to Y$ a $\sigma$-additive vector measure. If $f\in L^p_{w}(\nu,X,Y)$ and $g\in L^q_{w}(\nu,X,Y)$. Then $$\sup\limits_{\|y'\|\leq 1}\left\{\int_{\Omega}\|f(w)\|_{X}\|g(w)\|_{X}d|y'\circ\nu|(w)\right\}\leq \|f\|_{L^p_{w}(\nu,X,Y)}\|g\|_{L^q_{w}(\nu,X,Y)}.$$
\end{prop}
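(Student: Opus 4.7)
The plan is to reduce the statement to the classical scalar Hölder inequality applied to each of the positive finite measures $|y'\circ\nu|$ on $(\Omega,\mathcal{A})$, and then pass to the supremum over $\|y'\|\leq 1$.

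First, fix an arbitrary $y'\in Y'$ with $\|y'\|\leq 1$. Since $f\in L^p_w(\nu,X,Y)$ and $g\in L^q_w(\nu,X,Y)$, the scalar functions $\|f\|_X$ and $\|g\|_X$ are, respectively, in $L^p(|y'\circ\nu|)$ and $L^q(|y'\circ\nu|)$ by the very definition of the $L^p_w(\nu,X,Y)$ norm. Because $|y'\circ\nu|$ is a positive finite measure, the classical Hölder inequality applies and yields
\begin{equation*}
\int_\Omega \|f(w)\|_X\|g(w)\|_X\, d|y'\circ\nu|(w) \leq \left(\int_\Omega \|f\|_X^p\, d|y'\circ\nu|\right)^{1/p}\left(\int_\Omega \|g\|_X^q\, d|y'\circ\nu|\right)^{1/q}.
\end{equation*}

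Next, I would take the supremum over $\|y'\|\leq 1$ on both sides. On the right-hand side, the standard bound $\sup_{y'}(a(y')b(y'))\leq (\sup_{y'}a(y'))(\sup_{y'}b(y'))$ for non-negative quantities gives
\begin{equation*}
\sup_{\|y'\|\leq 1}\left(\int_\Omega \|f\|_X^p\, d|y'\circ\nu|\right)^{1/p}\left(\int_\Omega \|g\|_X^q\, d|y'\circ\nu|\right)^{1/q} \leq \|f\|_{L^p_w(\nu,X,Y)}\|g\|_{L^q_w(\nu,X,Y)},
\end{equation*}
by the definition of the norms. Combining the two inequalities gives the desired bound on the left-hand side $\sup_{\|y'\|\leq 1}\int_\Omega \|f\|_X\|g\|_X\, d|y'\circ\nu|$.

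There is no real obstacle here: the only thing to verify is that the product $\|f\|_X\|g\|_X$ is integrable against each $|y'\circ\nu|$, which is precisely what scalar Hölder guarantees from $\|f\|_X^p,\|g\|_X^q\in L^1(|y'\circ\nu|)$. The proof is essentially a one-line reduction to the scalar case, plus the elementary fact that the supremum of a product is at most the product of the suprema.
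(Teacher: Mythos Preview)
Your proof is correct. The paper's argument is very close to yours: it normalizes by the full norms $\|f\|_{L^p_w}$ and $\|g\|_{L^q_w}$, applies Young's inequality $ab\le a^p/p+b^q/q$ pointwise, integrates against $|y'\circ\nu|$, and uses that each resulting integral is bounded by $1$ before taking the supremum. You instead invoke the scalar H\"older inequality for each fixed $y'$ as a black box and then pass to the supremum via $\sup_{y'}(a(y')b(y'))\le(\sup_{y'}a(y'))(\sup_{y'}b(y'))$. The two arguments are essentially the same reduction to the scalar case, organized slightly differently; your version is marginally more modular since it avoids re-deriving H\"older from Young, while the paper's version makes the role of the global normalization explicit from the start.
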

\begin{proof}
	We will use Young's inequality $$ab\leq \frac{a^p}{p}+\frac{b^q}{q}, \quad a,b\geq 0, \quad \frac{1}{p}+\frac{1}{q}=1.$$
	In this case, let be $$a=\frac{\|f(w)\|}{\|f\|_{L^p_{w}(\nu,X,Y)}}\quad \quad b=\frac{\|g(w)\|}{\|g\|_{L^q_{w}(\nu,X,Y)}}.$$
	Then $$\sup\limits_{\|y'\|\leq 1}\left\{\int_{\Omega}\frac{\|f(w)\|\|g(w)\|}{\|f\|_{L^p_{w}(\nu,X,Y)}\|g\|_{L^q_{w}(\nu,X,Y)}}d|y'\circ\nu|(w)\right\} \leq \frac{1}{p}+\frac{1}{q}=1,$$
	so that
	$$\sup\limits_{\|y'\|\leq 1}\left\{\int_{\Omega}\|f(w)\|\|g(w)\|d|y'\circ\nu|(w)\right\}\leq \|f\|_{L^p_{w}(\nu,X,Y)}\|g\|_{L^q_{w}(\nu,X,Y)}. $$
\end{proof}

If $X$ is a Banach algebra, the H\"older inequality is modified as follows:

\begin{prop}(H\"older Inequality). \label{prop: Holder Inequalitymodified}
Let $1 < p <\infty$ and $\frac{1}{p}+\frac{1}{q} = 1 $. If $f \in L^p_{w}(\nu,X,Y) $ and $g\in L^q_{w}(\nu,X,Y)$
then $$\sup\limits_{\|y'\|\leq 1} \left \{\int _ {\Omega} \|f(w)g(w)\|_{X}d|y'\circ\nu|(w)\right\} 
\leq \|f \|_ {L^p_{w}(\nu,X,Y)} \| g \|_{L^q_{w}(\nu,X,Y)} .$$
	\end{prop}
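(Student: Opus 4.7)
The plan is to reduce this statement directly to the previous Hölder inequality proposition by exploiting the Banach algebra structure on $X$. Recall that in a Banach algebra, multiplication is submultiplicative, i.e., $\|xy\|_X \leq \|x\|_X \|y\|_X$ for all $x,y \in X$. Applying this pointwise to $f(w), g(w) \in X$ yields
$$\|f(w) g(w)\|_X \leq \|f(w)\|_X \|g(w)\|_X$$
for $\nu$-a.e. $w \in \Omega$.

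Next I would fix an arbitrary $y' \in Y'$ with $\|y'\| \leq 1$ and integrate the above pointwise inequality against the positive scalar measure $|y'\circ\nu|$, obtaining
$$\int_\Omega \|f(w) g(w)\|_X \, d|y'\circ\nu|(w) \leq \int_\Omega \|f(w)\|_X \|g(w)\|_X \, d|y'\circ\nu|(w).$$
Taking the supremum over all such $y'$ on both sides preserves the inequality.

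Finally I would invoke Proposition \ref{prop:Desigualdad de Holder} (the previous Hölder inequality), which bounds the right-hand side by $\|f\|_{L^p_w(\nu,X,Y)} \|g\|_{L^q_w(\nu,X,Y)}$. Combining the two estimates gives the claimed bound. There is no real obstacle here: the only ingredient beyond the previous proposition is the submultiplicativity of the Banach algebra norm, which is used precisely once and in an entirely pointwise fashion. The $\nu$-measurability of $w \mapsto \|f(w)g(w)\|_X$ is automatic from that of $\|f\|_X$ and $\|g\|_X$ together with the pointwise domination, so no additional measurability argument is required.
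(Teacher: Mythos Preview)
Your argument is correct and is exactly the one the paper has in mind: the paper states this proposition immediately after Proposition~\ref{prop:Desigualdad de Holder} without proof, presenting it as the obvious modification when $X$ is a Banach algebra, and your reduction via submultiplicativity of the algebra norm is precisely that modification. One small quibble: your closing remark that measurability of $w\mapsto\|f(w)g(w)\|_X$ follows ``from that of $\|f\|_X$ and $\|g\|_X$ together with the pointwise domination'' is not quite right---domination does not yield measurability---but the conclusion is still fine, since $f$ and $g$ are $\nu$-measurable and multiplication in a Banach algebra is continuous, so $fg$ is $\nu$-measurable and hence $\|fg\|_X$ is measurable.
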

In the above statement $f(w)g(w)$ denotes the multiplication of $f(w)$ and $g(w)$ in the algebra $X$.\\

Despite the existence of reasonable H\"older inequalities, the study of duals for these spaces exhibit some pathologies. 
Thus, we devote the last part of this work to note some observations  regarding the duality of the spaces $L^ p_{w}(\nu, X, Y) $. 
	To do so, we will begin by stating the following definitions taken from \cite{KK}.
	\begin{defn}
	Let  $Y$ be a Banach space. We say that $Y$ has the 
	\textit{B-P property} if given a sequence  
	$(y_n)_{n \in \mathbb{N}}$ such that $\sum 
	\limits_{n=1}^\infty|\left\langle y', y_n 
	\right \rangle|<\infty $ for each $y' \in Y'$ there exists an element $ y \in Y $ with $ y = 
	\sum \limits_{n = 1 }^\infty y_n $.
	\end{defn}
	If $Y$ is a Banach space, then $Y$ has the  B-P property if and only if $Y$ does not contain an isometric copy of $ c_0 $.\\
	
	As a consequence of the above, we have the following theorem.
	\begin{thm}
	Let $Y$ be a Banach space with the B-P property  and $\nu:\mathcal{A} \to Y $ a $\sigma$-additive vector measure. If $f$ is a real valued measurable function  and 
	$y'\circ \nu $-integrable for each $y' \in Y'$ then $f$ is $\nu $-integrable.
	\end{thm}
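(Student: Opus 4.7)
\medskip

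\noindent\textbf{Plan.} The weak integrability hypothesis is precisely condition (1) of Definition \ref{def1: definitionofintegrabilitylewis}, so the task reduces to constructing, for each $E\in\mathcal{A}$, a vector $\int_E f\,d\nu\in Y$ whose image under every $y'\in Y'$ equals $\int_E f\,d(y'\circ\nu)$. The strategy is to decompose $f$ into a countable sum of bounded measurable pieces, integrate each piece by the usual approximation-by-simple-functions procedure, and glue the resulting $Y$-valued series together using the B-P property of $Y$.

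\medskip

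\noindent\textbf{Step 1 (bounded case).} For a measurable $g$ with $|g|\le M$ I approximate $g$ uniformly by simple functions $s_k$. For a simple function $s=\sum c_i\chi_{B_i}$ the integral $\int_E s\,d\nu\egal\sum c_i\nu(B_i\cap E)$ obviously lies in $Y$, and the estimate
\[
\Bigl\|\int_E s_k\,d\nu-\int_E s_j\,d\nu\Bigr\|\le \|s_k-s_j\|_{\infty}\,\|\nu\|(E)
\]
(using finiteness of the semivariation $\|\nu\|(E)$, which follows from $\sigma$-additivity) shows $\bigl(\int_E s_k\,d\nu\bigr)$ is Cauchy. Its limit defines $\int_E g\,d\nu$, and continuity of $y'$ together with uniform convergence of $s_k\to g$ in $L^1(|y'\circ\nu|)$ yields the identity $y'\bigl(\int_E g\,d\nu\bigr)=\int_E g\,d(y'\circ\nu)$ for every $y'\in Y'$.

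\medskip

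\noindent\textbf{Step 2 (series decomposition and B-P).} Set $A_n\egal\{w\in\Omega:n-1\le |f(w)|<n\}$ for $n\ge 1$; this is a measurable partition of $\Omega$. Each $f\chi_{A_n}$ is bounded, so by Step 1 the vectors $y_n^E\egal\int_E f\chi_{A_n}\,d\nu$ exist in $Y$. The weak integrability hypothesis gives, for every $y'\in Y'$,
\[
\sum_{n=1}^\infty\bigl|\langle y',y_n^E\rangle\bigr|
=\sum_{n=1}^\infty\Bigl|\int_{E\cap A_n}f\,d(y'\circ\nu)\Bigr|
\le \int_{\Omega}|f|\,d|y'\circ\nu|<\infty,
\]
so the B-P property furnishes a vector $y_E\in Y$ with $y_E=\sum_{n=1}^\infty y_n^E$ in norm. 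Setting $\int_E f\,d\nu\egal y_E$, a termwise application of $y'$ combined with the dominated convergence theorem for the scalar measure $|y'\circ\nu|$ (with dominant $|f|$) gives $y'(y_E)=\int_E f\,d(y'\circ\nu)$, which is condition (2) of Definition \ref{def1: definitionofintegrabilitylewis}. The main obstacle is really the passage from scalar to vector integration, which is exactly where B-P (equivalently, the absence of an isometric copy of $c_0$ in $Y$) is indispensable; the remaining work is routine bookkeeping around the partition $(A_n)$.
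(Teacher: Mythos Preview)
Your argument is correct. The paper itself does not give a proof at all; it simply cites \cite{KK}, Theorem~1, p.~31. What you have written is essentially the classical argument found there (and in Diestel--Uhl): integrate bounded truncations via uniform approximation by simple functions and the finiteness of the semivariation, then partition $\Omega$ into the level sets $A_n=\{\,n-1\le |f|<n\,\}$ and invoke the B-P property to sum the resulting vectors $y_n^E=\int_{E\cap A_n} f\,d\nu$. The only place one might want an extra word is the passage ``$y_E=\sum y_n^E$ in norm'': strictly, B-P as stated in the paper gives existence of the sum, and norm (indeed unconditional) convergence then follows from Orlicz--Pettis, but this is standard and does not affect the validity of your proof.
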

	\begin{proof}
	See \cite{KK}, Theorem 1, Page 31.
	\end{proof}
	With this in mind, if $ X = \mathbb {R} $ and $Y$ has the B-P property, then  $L^p_{w}(\nu)= L^p(\nu) $ and thanks to the example by Sanchez in \cite{Sn} constructed in the case $Y=\ell^2$, one can show us that  these spaces have some obstacles to characterize their duals. In addition, if $Y$ has the B-P property, we can conclude that $L^p(\nu,X,Y)= L^p_w(\nu,X,Y)$ since the previous theorem is applied to real valued measurable function $\|f\|^p$.

\vspace{0.2cm}
	
\textbf{Acknowledments}

I would like to express my gratitude to Dr. Julio Delgado who suggested me to work on these $L^p$ spaces as part of my Ph.D. thesis and for the fruitful discussions during my visit to the Imperial College London.



\end{document}